\theoremstyle{theorem}
\newtheorem{theorem}{Theorem}[section]
\newtheorem{corollary}[theorem]{Corollary}
\newtheorem{example}{Example}[section]
\numberwithin{equation}{section}
\title{Hilbert's Nullstellensatz over quaternions} 
\author{Masood Aryapoor\\
\tiny{\textit{Division of Mathematics and Physics}}\\
\tiny{\textit{M\"{a}lardalen  University}}\\
\tiny{\textit{Hamngatan 15, 632 17, Eskilstuna, 
Sweden
}}
}
 \date{}
\begin{document}
 \maketitle
\begin{abstract}
\noindent
Using the Rabinowitsch trick, we prove a version of Nullstellensatz over quaternions, which generalizes  Hilbert's Nullstellensatz  over complex numbers. 
\end{abstract}
\begin{section}{Introduction}

In \cite{AlonNullestellensatz}, two forms of  (central) Nullstellensatz over the division ring of quaternions $\mathbb{H}$ are presented: a weak form and a strong form. The authors of \cite{AlonNullestellensatz} remark that ``the classical strong Nullstellensatz for $\mathbb{C}[x_1,\dots, x_n]$ can be easily deduced as an immediate consequence of the weak Nullstellensatz, using the famous Rabinowitsch trick. Such a proof does not seem possible for $\mathbb{H}[x_1,\dots, x_n]$, since substitution is not a homomorphism.'' Here, $\mathbb{H}[x_1,\dots, x_n]$ denotes the ring of polynomials over $\mathbb{H}$ in $n$ central indeterminates. Our goal is to fill the gaps by adapting the Rabinowitsch trick to derive a version of Nullstellensatz over quaternions that generalizes the following form of Hilbert's Nullstellensatz over complex numbers:
\begin{theorem}[Hilbert's Nullstellensatz over complex numbers]
	A polynomial $F\in \mathbb{C}[x_1,\dots, x_n]$ vanishes on the zero locus of an ideal $I$  of  $\mathbb{C}[x_1,\dots, x_n]$  iff there exists a natural number $N\geq 1$ such that $F^N\in I$. 
\end{theorem}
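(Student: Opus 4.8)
The plan is to reduce the statement to the \emph{weak} Nullstellensatz over $\mathbb{C}$ --- the assertion that a proper ideal of $\mathbb{C}[x_1,\dots,x_n]$ has a nonempty zero locus in $\mathbb{C}^n$ --- and then to apply the Rabinowitsch trick. One direction is immediate: if $F^N\in I$ for some $N\geq 1$, then $F^N$ vanishes at every point of the zero locus $V(I)$, and since $\mathbb{C}$ is an integral domain this forces $F$ itself to vanish on $V(I)$.

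For the nontrivial direction, suppose $F$ vanishes on $V(I)$. By the Hilbert basis theorem $I$ is finitely generated, say $I=(f_1,\dots,f_m)$. Introduce a fresh central indeterminate $y$ and form the ideal
\[
J=(f_1,\dots,f_m,\,yF-1)\subseteq \mathbb{C}[x_1,\dots,x_n,y].
\]
I claim $V(J)=\varnothing$ in $\mathbb{C}^{n+1}$: any common zero $(a,b)$ of $f_1,\dots,f_m$ has $a\in V(I)$, hence satisfies $F(a)=0$, so $bF(a)-1=-1\neq 0$ and $(a,b)\notin V(yF-1)$. By the weak Nullstellensatz, $J$ cannot be proper, so $1\in J$; that is,
\[
1=\sum_{i=1}^m g_i(x,y)\,f_i(x)+g_0(x,y)\,\bigl(yF(x)-1\bigr)
\]
for suitable $g_0,g_1,\dots,g_m\in\mathbb{C}[x_1,\dots,x_n,y]$.

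The final step is the substitution $y\mapsto 1/F$, carried out inside the rational function field $\mathbb{C}(x_1,\dots,x_n)$ --- legitimate here because $\mathbb{C}[x]$ is commutative, so evaluation at a rational function is a ring homomorphism. This kills the last term and yields $1=\sum_i g_i(x,1/F)\,f_i(x)$; multiplying through by a sufficiently large power $F^N$ to clear all denominators gives $F^N=\sum_i h_i(x)\,f_i(x)\in I$ with $h_i\in\mathbb{C}[x_1,\dots,x_n]$, as desired.

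The only genuine input is the weak Nullstellensatz; the rest of the argument is purely formal, and in the classical setting it is this existence-of-a-common-zero statement that constitutes the real obstacle. For the quaternionic generalization I expect the crux to shift: as the introduction notes, over $\mathbb{H}$ the map $y\mapsto 1/F$ is not a ring homomorphism, so the interesting work will lie in finding a correct substitute for this last step --- presumably a two-sided or ``central'' version of the evaluation and the clearing-of-denominators move --- rather than in the combinatorial skeleton of the Rabinowitsch trick itself.
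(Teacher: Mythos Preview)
Your proof is correct and is the standard Rabinowitsch argument. The paper does not supply a separate proof of this classical background statement, but your approach is exactly the template the paper follows for its quaternionic Theorem~\ref{thm: Strong Nullstellensatz over quaternions}: form the ideal generated by $I$ and $Fy-1$ in an extra central variable $y$, invoke the weak Nullstellensatz to conclude it is improper, then substitute $y=F^{-1}$ in the field of fractions and multiply through by $F^N$ to clear denominators.
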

More precisely, we shall show the following result, the proof of which is given in Subsection \ref{sec: Strong Nullstellensatz over quaternions}:
\begin{theorem}[Hilbert's Nullstellensatz over quaternions]\label{thm: Strong Nullstellensatz over quaternions}
	A polynomial $F\in \mathbb{H}[x_1,\dots, x_n]$ vanishes on the zero locus of a left ideal $I$ of $\mathbb{H}[x_1,\dots, x_n]$  iff for every $a\in\mathbb{H}$, there exists a natural number $N\geq 1$ such that 
	\begin{equation}\label{eq: aF}
		(aF)^N\in I+I(aF)+I(aF)^2+\cdots+I(aF)^N.
	\end{equation}
\end{theorem}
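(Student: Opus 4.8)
The plan is to push Rabinowitsch's trick through, with a modification that sidesteps the obstruction noted in \cite{AlonNullestellensatz}. In $\mathbb{H}[x_1,\dots,x_n,y]$ every substitution homomorphism must send the central variable $y$ to a \emph{central} element, so the classical move ``set $y=F^{-1}$'' is illegal; I would instead introduce $1-y(aF)$, apply the weak Nullstellensatz over $\mathbb{H}$ of \cite{AlonNullestellensatz} to get a B\'ezout-type identity, and then recover \eqref{eq: aF} by reading off the coefficients of the powers of $y$ and telescoping. The two implications are independent, and only ``$\Rightarrow$'' needs Rabinowitsch.

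For ``$\Leftarrow$'', the key input is the twisted substitution rule for quaternionic polynomials: for $G,H\in\mathbb{H}[x_1,\dots,x_n]$ one has $(GH)(p)=0$ whenever $H(p)=0$, and $(GH)(p)=G\bigl(H(p)\,p\,H(p)^{-1}\bigr)\,H(p)$ when $H(p)\neq0$, where $H(p)\,p\,H(p)^{-1}$ denotes coordinatewise conjugation (over a single variable this is classical, and I would record the version needed here as a preliminary lemma). Granting it, suppose that some $p\in V(I)$ had $F(p)=q\neq0$, and set $a=q^{-1}$, so that $(aF)(p)=a\,F(p)=1$. Writing $(aF)^{k+1}=(aF)\cdot(aF)^k$ and applying the rule, induction gives $\bigl((aF)^k\bigr)(p)=1$ for every $k\ge0$. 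The hypothesis supplies $h_0,\dots,h_N\in I$ with $(aF)^N=\sum_{k=0}^N h_k(aF)^k$; evaluating at $p$ and using the rule again to compute $\bigl(h_k(aF)^k\bigr)(p)=h_k(p)=0$ (since $h_k\in I$ and $p\in V(I)$), we would get $1=\sum_k 0=0$. Hence no such $p$ exists, and $F$ vanishes on $V(I)$.

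For ``$\Rightarrow$'', fix $a\in\mathbb{H}$, put $g=aF$, and dispose of the trivial case $g=0$ (where $(aF)^N=0\in I$). Since $F$, hence $g$, vanishes on $V(I)$, adjoin a fresh central indeterminate $y$ and set $J=\mathbb{H}[x,y]\cdot I+\mathbb{H}[x,y]\cdot(1-yg)$, a left ideal of $\mathbb{H}[x_1,\dots,x_n,y]$. Its zero locus is empty: a point $(p,q)\in V(J)$ would satisfy $G(p)=0$ for all $G\in I$, hence $p\in V(I)$ and $g(p)=0$, so $(1-yg)(p,q)=1-g(p)\,q=1\neq0$, contradicting $(1-yg)\in J$. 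By the weak Nullstellensatz over $\mathbb{H}$ of \cite{AlonNullestellensatz} we obtain $1\in J$, say $1=\sum_j u_j g_j+k(1-yg)$ with finitely many $g_j\in I$ and $u_j,k\in\mathbb{H}[x,y]$. Now expand $u_j=\sum_m u_{j,m}(x)y^m$ and $k=\sum_m k_m(x)y^m$, put $D=\deg_y k$ (so $k_{D+1}=0$), and note that $v_m=\sum_j u_{j,m}(x)g_j$ lies in $I$ because $I$ is a left ideal. Since $y$ commutes with everything, comparing coefficients of $y^m$ on the two sides of the identity yields $1=v_0+k_0$ together with the recursion $k_{m-1}\,g=v_m+k_m$ for all $m\ge1$. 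Multiplying this recursion repeatedly on the right by $g$ and telescoping gives
\[
k_0\,g^{D+1}=\sum_{l=1}^{D+1}v_l\,g^{D+1-l}\in I+Ig+\dots+Ig^{D},
\]
and, since $k_0=1-v_0$,
\[
g^{D+1}=k_0\,g^{D+1}+v_0\,g^{D+1}\in I+Ig+Ig^2+\dots+Ig^{D+1},
\]
i.e.\ $(aF)^{D+1}\in I+I(aF)+\dots+I(aF)^{D+1}$, which is \eqref{eq: aF} with $N=D+1$.

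The main obstacle — and the reason the conclusion has the peculiar form \eqref{eq: aF} rather than the classical ``$F^N\in I$'' — is precisely the failure of substitution to be a ring homomorphism: the $y$-adic bookkeeping above is the legitimate replacement for plugging in $y=g^{-1}$, and it inevitably delivers $g^N$ only inside $I+Ig+\dots+Ig^N$ (in the commutative case $Ig^l\subseteq I$, so the statement collapses to Hilbert's). Quantifying over all $a\in\mathbb{H}$ is forced for the same reason: the ``$\Rightarrow$'' argument runs verbatim for each fixed $a$, whereas the ``$\Leftarrow$'' argument needs the freedom to normalise the value of $F$ to $1$ at a putative nonzero of $F$ on $V(I)$, which is exactly what $a=F(p)^{-1}$ accomplishes.
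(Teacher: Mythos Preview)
Your proof is correct and follows the same Rabinowitsch strategy as the paper: adjoin a central $y$, show via the weak Nullstellensatz that the left ideal generated by $I$ and $1-y(aF)$ (the paper uses $Fy-1$, equivalent since $y$ is central) is improper, and then extract \eqref{eq: aF} from the resulting B\'ezout identity. Your ``$\Leftarrow$'' direction is essentially identical to the paper's: both set $a=F(p)^{-1}$ so that $(aF)(p)=1$, and then the product formula forces each $G_k(aF)^k$ with $G_k\in I$ to evaluate to $0$ at $p$, yielding the contradiction $1=0$.

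The one genuine difference is in how the B\'ezout identity is unpacked in the ``$\Rightarrow$'' direction. You compare $y$-coefficients and telescope; the paper actually \emph{does} set $y=F^{-1}$, working in the Ore division ring of fractions $\mathbb{H}(x_1,\dots,x_n)$ and invoking the same evaluation theory there for $\mathbb{H}(x_1,\dots,x_n)[y]$: since $(Fy-1)(F^{-1})=0$, the product formula kills the term $G(Fy-1)$ and one reads off $\sum_m G_mF^{-m}=1$ directly, then clears denominators. So your assertion that this move is ``illegal'' is too strong --- it is not a ring homomorphism, but it is a legitimate evaluation in the sense of \cite{bennenni2023evaluation}. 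Your coefficient-telescoping route buys freedom from the Ore localization and from a second appeal to the evaluation machinery, so it is slightly more self-contained; the paper's version is shorter once that machinery is already on the table.
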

Our proof of the theorem is similar to deriving Hilbert's Nullstellensatz over complex numbers from the weak form of Nullstellensatz, using the Rabinowitsch trick. The main tool used in the proof is the theory of evaluation of polynomials in several central variables over division rings, which is developed in \cite{bennenni2023evaluation}.
   
\end{section} 


\begin{section}{Hilbert's Nullstellensatz over quaternions}
	
In this section, we prove Hilbert's Nullstellensatz over quaternions.


\begin{subsection}{Evaluation of polynomials over division rings}
	This subsection deals with some facts regarding evaluation of polynomials over division rings. For more details, see \cite{bennenni2023evaluation}.  Let $K$ be a division ring and $K[x_1,\dots,x_n]$ be the ring of polynomials over $K$ in central indeterminates $x_1,\dots,x_n$. The set of points $(a_1,...,a_n)\in K$ for which $a_ra_s=a_sa_r$ for all $r,s$ is denoted by $K^n_c$. One can show that for every point $p=(a_1,\dots,a_n)\in  K^n_c$ and polynomial $F\in K[x_1,\dots,x_n]$, there exists a unique element $F(p)=F(a_1,\dots,a_n)\in K$, called the evaluation of $F$ at $p$, such that the polynomial $F-F(p)$ belongs to the left ideal of $K[x_1,\dots,x_n]$ generated by the polynomials $x_1-a_1,...,x_n-a_n$, that is,
	\[
		F-F(p)\in \sum_{m=1}^n\mathbb{H}[x_1,\dots,x_n]\,(x_m-a_m). 
	\]
	 We also have the product formula: for all points $p=(a_1,\dots,a_n)\in  K^n_c$  and polynomials $F,G\in K[x_1,\dots,x_n]$, 
	\begin{equation}\label{equ: product formula}
		(FG)(p)=
		\begin{cases}
			0& \text{ if } G(p)=0,\\
			F\left( G(p)pG(p)^{-1} \right) & \text{ if  } G(p)\neq 0.
		\end{cases}
	\end{equation}
	Here, the notation $bpb^{-1}$, where $(a_1,\dots,a_n)\in K^n$ and $b\in K\setminus\{0\}$, stands for the point
	\[
		bpb^{-1}\colonequals (ba_1b^{-1},\dots,ba_nb^{-1} )\in K^n.
	\]
 	Note that  if $p\in K^n_c$ and $b\in K\setminus\{0\}$, then $bpb^{-1}\in K^n_c$.
	  
\end{subsection}

\begin{subsection}{The weak form of Nullstellensatz over quaternions}
	
In this part, we review the weak form of Nullstellensatz over quaternions,  which is given in \cite{AlonNullestellensatz}. Let $\mathbb{H}$ denote the division ring of quaternions. Given a left ideal $I$ of $\mathbb{H}[x_1,\dots,x_n]$, we define the zero locus of $I$ to be the subset $\mathcal{V}(I)$ of $\mathbb{H}^n_c$ that is defined by 
\[
	\mathcal{V}(I)=\{p\in \mathbb{H}^n_c\,|\, F(p)=0 \text{ for all } F\in I\}. 
\]
For a subset $A$ of  $\mathbb{H}^n_c$, the set of polynomials $F\in \mathbb{H}[x_1,\dots,x_n]$ satisfying $F(p)=0$ for all $p\in A$ is denoted by $\mathcal{I}(A)$. As an easy consequence of the product formula  \ref{equ: product formula}, we see that  $\mathcal{I}(A)$ is always a left ideal of $ \mathbb{H}[x_1,\dots,x_n]$. The result below is the weak form of Nullstellensatz over quaternions (for a proof, see \cite[Thoerem 1.1]{AlonNullestellensatz}):
\begin{theorem}[Weak Nullstellensatz over quaternions]\label{thm: Weak Nullstellensatz over quaternions}
	For all left ideals $I$ of $\mathbb{H}[x_1,\dots,x_n]$, the set $\mathcal{V}(I)$ is empty iff $I=\mathbb{H}[x_1,\dots,x_n]$. 
\end{theorem}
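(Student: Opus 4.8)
The plan is as follows. One direction is immediate: if $I=\mathbb H[x_1,\dots,x_n]$ then $1\in I$, and since $1(p)=1\neq 0$ for every $p\in\mathbb H^n_c$ we get $\mathcal V(I)=\emptyset$. For the converse I would argue by contraposition, showing that a proper left ideal has a common zero, and the strategy is to complexify. Write $S=\mathbb H[x_1,\dots,x_n]$ and $T=\mathbb C[x_1,\dots,x_n]$, and use the classical isomorphism $\mathbb H\otimes_{\mathbb R}\mathbb C\cong M_2(\mathbb C)$ furnished by the standard representation $q=z+wj\mapsto M_q$, where $\mathbb H=\mathbb C\oplus\mathbb C j$ with $z,w\in\mathbb C$ and $M_q$ has first row $(z,w)$ and second row $(-\bar w,\bar z)$. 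Tensoring with the polynomial ring (the $x_i$ being central) gives a ring embedding $\rho\colon S\hookrightarrow M_2(\mathbb C)[x_1,\dots,x_n]=M_2(T)$ with $\rho(x_i)=x_iI_2$, under which $M_2(T)$ is identified with $S\otimes_{\mathbb R}\mathbb C$; concretely $M_2(T)=\rho(S)\oplus\rho(S)\epsilon$ with $\epsilon=\sqrt{-1}\,I_2$ central and $\epsilon^2=-1$. On the $M_2(T)$ side I can invoke the classical (weak) Nullstellensatz over $\mathbb C$; the real work will be to carry the point it produces back to an honest quaternionic common zero.

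So let $I\subsetneq S$ be a proper left ideal. First I would check that the left ideal $J:=M_2(T)\,\rho(I)$ of $M_2(T)$ is again proper: because $I$ is a left ideal one has $J=\rho(I)\oplus\rho(I)\epsilon$ inside the decomposition $M_2(T)=\rho(S)\oplus\rho(S)\epsilon$, so $1\in J$ would force $\rho^{-1}(1)=1\in I$. Next I would extract a point. The left module $M:=M_2(T)/J$ is nonzero and finitely generated over its central subring $T$ (it is cyclic over $M_2(T)$, which is free of rank $4$ over $T$), hence $\operatorname{ann}_T M$ is a proper ideal of $T$; picking a maximal ideal $\mathfrak m\supseteq\operatorname{ann}_T M$ gives $\mathfrak m\in\operatorname{Supp}M$, so $M/\mathfrak m M\neq0$ by Nakayama, while the weak Nullstellensatz over $\mathbb C$ yields $\mathfrak m=(x_1-c_1,\dots,x_n-c_n)$ with $c=(c_1,\dots,c_n)\in\mathbb C^n$. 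Reducing modulo $\mathfrak m$ identifies $M_2(T)/\mathfrak m M_2(T)$ with $M_2(\mathbb C)$ via $A\mapsto A(c)$ (entrywise evaluation), and turns $M/\mathfrak m M\neq0$ into the assertion that $\overline J:=\{A(c):A\in J\}$ is a proper left ideal of $M_2(\mathbb C)$. Since the proper left ideals of $M_2(\mathbb C)$ are $0$ and the annihilators $\{B:Bv=0\}$ of nonzero vectors $v\in\mathbb C^2$, I get $v\in\mathbb C^2\setminus\{0\}$ with $A(c)\,v=0$ for every $A\in J$; in particular $\rho(F)(c)\,v=0$ for every $F\in I$.

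Then I would build the quaternionic zero. Choose $b=z_0+w_0 j\in\mathbb H^{\times}$ with $M_b e_1=v$, which is possible since $M_b e_1=(z_0,-\bar w_0)^{t}$ runs through all of $\mathbb C^2\setminus\{0\}$ as $b$ runs through $\mathbb H^{\times}$. Regarding $c_1,\dots,c_n$ as elements of the standard copy $\mathbb C\subset\mathbb H$, put $p:=(bc_1b^{-1},\dots,bc_nb^{-1})$; its components all lie in the subfield $b\mathbb C b^{-1}\cong\mathbb C$, so they commute and $p\in\mathbb H^n_c$. For $F=\sum_\alpha q_\alpha x^{\alpha}\in S$, the evaluation theory recalled above (the defining property of $F(p)$, the easy telescoping fact $x^{\alpha}-a^{\alpha}\in\sum_m S(x_m-a_m)$, and uniqueness of the evaluation) gives $F(p)=\sum_\alpha q_\alpha\,bc^{\alpha}b^{-1}$ with $c^{\alpha}=c_1^{\alpha_1}\cdots c_n^{\alpha_n}\in\mathbb C$. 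Applying the ring homomorphism $q\mapsto M_q$ and right-multiplying by $M_b$, the scalar matrices $M_{c^{\alpha}}=\operatorname{diag}(c^{\alpha},\overline{c^{\alpha}})$ collapse the sum, yielding $(M_{F(p)}M_b)e_1=\bigl(\sum_\alpha c^{\alpha}M_{q_\alpha}\bigr)(M_be_1)=\rho(F)(c)\,v$, which is $0$ when $F\in I$. But $M_{F(p)}M_b=M_{F(p)b}$ is a matrix of the form $M_q$, and such a matrix is zero as soon as it kills $e_1$ (the column $M_qe_1=(z,-\bar w)^{t}$ determines $q$); hence $F(p)b=0$, and since $b\neq0$ in the division ring $\mathbb H$ this forces $F(p)=0$. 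Thus $p\in\mathcal V(I)$, so $\mathcal V(I)\neq\emptyset$, completing the contrapositive.

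The main obstacle is the one flagged in the introduction: quaternionic evaluation is not a ring homomorphism, so one cannot simply mod out by a maximal left ideal and read off a point. The way around it is the matrix realisation $q\mapsto M_q$ together with two small but essential bookkeeping facts — that a matrix of the form $M_q$ vanishes the moment it annihilates $e_1$, and that replacing the commutative tuple $(c_1,\dots,c_n)$ by its conjugate $b(c_1,\dots,c_n)b^{-1}$ has exactly the effect of replacing the test vector $e_1$ by $M_be_1$. Lining these up (and confirming the resulting tuple really lies in $\mathbb H^n_c$) is where the argument actually has content; the remainder is routine commutative algebra over $\mathbb C[x_1,\dots,x_n]$ plus the classical complex Nullstellensatz.
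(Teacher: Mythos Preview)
Your argument is correct. The trivial direction is immediate, and for the substantive one your complexification strategy works: the decomposition $M_2(T)=\rho(S)\oplus\rho(S)\epsilon$ with $\epsilon=\sqrt{-1}\,I_2$ is valid (because $iI_2$ is \emph{not} of the form $M_q$), the properness of $J$ follows, the passage to a maximal ideal of $T$ via the support of $M_2(T)/J$ is standard, the classification of left ideals of $M_2(\mathbb{C})$ gives you the vector $v$, and the final bookkeeping identity $M_{F(p)b}\,e_1=\rho(F)(c)\,v$ together with the injectivity of $q\mapsto M_qe_1$ closes the loop. I checked the evaluation step: linearity of $F\mapsto F(p)$ and the telescoping $x^\alpha-p^\alpha\in\sum_mS(x_m-p_m)$ indeed give $F(p)=\sum_\alpha q_\alpha\,bc^\alpha b^{-1}$, and $M_{c^\alpha}e_1=c^\alpha e_1$ since $c^\alpha\in\mathbb{C}$ has $w$-component zero.

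As for comparison: the paper does not supply its own proof of this theorem. It simply records the statement and refers to \cite[Theorem~1.1]{AlonNullestellensatz} for a proof, using it as a black box in the Rabinowitsch argument for Theorem~\ref{thm: Strong Nullstellensatz over quaternions}. So there is nothing in the paper to weigh your proof against; you have in effect filled in what the paper outsources. Your route---extending scalars to $\mathbb{C}$, invoking the classical complex Nullstellensatz on $T=\mathbb{C}[x_1,\dots,x_n]$, and then descending via the column $M_qe_1$---is clean and keeps the noncommutative content to the single observation that conjugating the commuting tuple $(c_1,\dots,c_n)$ by $b$ corresponds to moving the test vector from $e_1$ to $M_be_1$.
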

\end{subsection}

	\begin{subsection}{The proof of Hilbert's Nullstellensatz over quaternions using the Rabinowitsch trick}\label{sec: Strong Nullstellensatz over quaternions}
		In this part, we give a proof of Theorem \ref{thm: Strong Nullstellensatz over quaternions}. 
	\begin{proof}[Proof of Theorem \ref{thm: Strong Nullstellensatz over quaternions}] 
		First, assume that the polynomial $F$ vanishes on the zero locus of $I$, that is, $F\in \mathcal{I}(\mathcal{V}(I))$.   Consider the ring of polynomials $\mathbb{H}[x_1,\dots,x_n][y]$ over $\mathbb{H}[x_1,\dots,x_n]$ in a central indeterminate $y$. I claim that the left ideal of $\mathbb{H}[x_1,\dots,x_n][y]$ which is generated by $I$ and the polynomial 
		$
			Fy-1
		$
		is equal to  $\mathbb{H}[x_1,\dots,x_n][y]$. To prove the claim, assume the contrary, i.e.,  this left ideal, denoted by $J$, is proper. It follows form the weak Nullstellensatz  (Theorem \ref{thm: Weak Nullstellensatz over quaternions}) applied to the polynomial ring $\mathbb{H}[x_1,\dots,x_n,y]$ that there exists a point	
		\[
			p=(a_1,\dots,a_n,b)\in \mathbb{H}^{n+1}_c
		\]
		such that $G(p)=0$ for all $G\in J$. Since $I\subset J$, we have $(a_1,\dots,a_n)\in \mathcal{V}(I)$. Evaluating $Fy-1\in J$ at $p$ and using the fact that $F\in \mathcal{I}(\mathcal{V}(I))$, we arrive at the contradiction $-1=0$. This completes the proof of the claim. As a consequence, we obtain an identity of the form 
		\[
			G\left(	Fy-1 \right)+\sum_{m= 0}^NG_{m}y^{m}=1,
		\]
		for some $G\in \mathbb{H}[x_1,\dots,x_n][y]$ and $G_{0},\dots,G_N\in I$.  Evaluating both sides of this identity  at the point $F^{-1}\in \mathbb{H}(x_1,\dots,x_n)$, where $\mathbb{H}(x_1,\dots,x_n)$ is the (classical) field of fractions of $\mathbb{H}[x_1,\dots,x_n]$,  we obtain an identity of the form 
		\[
			\sum_{m=0}^NG_{m}F^{-m}=1.
		\]
 We multiply both sides of this identity by $F^N$ (on the right), yielding
		\[
		F^N= \sum_{m=0}^NG_{m}F^{N-m}\in  I+IF+IF^2+\cdots+IF^N. 
		\]
		Since $aF\in \mathcal{I}(\mathcal{V}(I))$ for all $a\in \mathbb{H}$, a similar argument gives the desired condition for $aF$, completing the proof of this direction.
		
		To prove the converse, let $F$ satisfy Condition \ref{eq: aF}. We need to show that $F(a_1,...,a_n)=0$ for all $(a_1,...,a_n)\in \mathcal{V}(I)$. Assume, on the contrary, that there exists  $(a_1,...,a_n)\in \mathcal{V}(I)$ such that $F(a_1,...,a_n)\neq 0$. Set $a=F(a_1,...,a_n)^{-1}$. It is easy to verify (using the product formula \ref{equ: product formula}) that $(H(aF))(a_1,...,a_n)=H(a_1,...,a_n)$ for all $H\in \mathbb{H}[x_1,\dots,x_n]$.  Applying Condition \ref{eq: aF} for $a=F(a_1,...,a_n)^{-1}$, we can find polynomials  $G_0,...,G_N\in I$ such that 
		\[
			(aF)^N= G_{0}+G_{1}(aF)+\cdots+G_{N}(aF)^N. 
		\]
		Evaluating the sides of this equality at $(a_1,...,a_n)$ leads to the contradiction $1=0$, which completes the proof of the theorem. 
	\end{proof}
			
	\end{subsection}
	\begin{subsection}{Some comments} 
		We conclude the paper with some remarks. Our first remark concerns Condition \ref{eq: aF} in Theorem \ref{thm: Strong Nullstellensatz over quaternions}. It turns out that the condition  can be somewhat weakened, as follows: 
		\begin{corollary}
			A polynomial $F\in \mathbb{H}[x_1,\dots, x_n]$ vanishes on the zero locus of a left ideal $I$ of $\mathbb{H}[x_1,\dots, x_n]$  iff for every $b\in\mathbb{H}$ with $b^2=-1$, there exists a natural number $N\geq 1$ such that 
			\[
			(bF)^N\in I+I(bF)+I(bF)^2+\cdots+I(bF)^N.
			\] 
		\end{corollary}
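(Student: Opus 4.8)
The plan is to obtain the forward implication immediately from Theorem~\ref{thm: Strong Nullstellensatz over quaternions}, and to prove the converse by a contradiction argument patterned on the converse direction of that theorem, the one new ingredient being a careful choice of the quaternion $b$. For the forward implication: if $F$ vanishes on $\mathcal{V}(I)$ then, by Theorem~\ref{thm: Strong Nullstellensatz over quaternions}, for every $a\in\mathbb{H}$ there is some $N\geq1$ with $(aF)^N\in I+I(aF)+\cdots+I(aF)^N$, and restricting to those $a=b$ with $b^2=-1$ yields exactly the asserted condition; so nothing more is needed here.

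For the converse I would assume the displayed condition holds for every $b$ with $b^2=-1$ and, aiming at a contradiction, suppose $F$ does not vanish on $\mathcal{V}(I)$, fixing $p=(a_1,\dots,a_n)\in\mathcal{V}(I)$ with $c\colonequals F(p)\neq0$. The crucial step is to choose $b$ with $b^2=-1$ so that $bc$ commutes with each of $a_1,\dots,a_n$. For this, let $R=\{h\in\mathbb{H}:ha_i=a_ih \text{ for all } i\}$ be the centralizer of $\{a_1,\dots,a_n\}$ in $\mathbb{H}$. It is an $\mathbb{R}$-subalgebra of $\mathbb{H}$, and it contains every $a_i$ because the $a_i$ pairwise commute; hence if all $a_i$ are real then $R=\mathbb{H}$, and otherwise $R\supseteq\mathbb{R}+\mathbb{R}a_j$ for any non-real $a_j$, so in either case $\dim_{\mathbb{R}}R\geq2$. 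Since right multiplication by $c^{-1}$ is an $\mathbb{R}$-linear automorphism of $\mathbb{H}$, the subspace $Rc^{-1}$ again has real dimension at least $2$, so it meets the $3$-dimensional subspace of purely imaginary quaternions in a subspace of dimension at least $2+3-4=1$. Choosing a nonzero $w$ in this intersection and putting $b\colonequals w/|w|$, I get a purely imaginary unit quaternion, so $b^2=-1$, and $b\in Rc^{-1}$, i.e.\ $bc\in R$.

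Next I would fix this $b$, apply the hypothesis to obtain $N\geq1$ and $G_0,\dots,G_N\in I$ with
\[
	(bF)^N=G_0+G_1(bF)+\cdots+G_N(bF)^N ,
\]
and evaluate both sides at $p$. First, $(bF)(p)=bc$: indeed $bF-bc=b\bigl(F-F(p)\bigr)$ lies in the left ideal generated by $x_1-a_1,\dots,x_n-a_n$, so the defining property of evaluation forces $(bF)(p)=bF(p)=bc$, which is a nonzero element of $R$. A short induction based on the product formula~\ref{equ: product formula} then shows that $\bigl((bF)^m\bigr)(p)$ is a nonzero element of $R$ for every $m\geq0$; in particular it centralizes $a_1,\dots,a_n$, so conjugating $p$ by it returns $p$, and the product formula makes each $\bigl(G_m(bF)^m\bigr)(p)$ a multiple of $G_m(p)=0$. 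Hence the right-hand side evaluates to $0$, while the left-hand side evaluates to the nonzero quaternion $\bigl((bF)^N\bigr)(p)$ --- the desired contradiction.

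The step I expect to be the main obstacle is exactly this construction of $b$: the choice $a=F(p)^{-1}$ used in the proof of Theorem~\ref{thm: Strong Nullstellensatz over quaternions} (which makes $(aF)(p)=1$) is unavailable, since $F(p)^{-1}$ need not square to $-1$, so one has to use the freedom left in $b$ together with the elementary observation that every $2$-dimensional real subspace of $\mathbb{H}\cong\mathbb{R}^4$ contains, after right-translation, a quaternion squaring to $-1$ --- the point being that such a $b$ makes all the conjugations occurring in the product formula trivial at $p$. Everything else is a routine adaptation of the argument already given.
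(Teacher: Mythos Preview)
Your proposal is correct and follows essentially the same approach as the paper: the key step in both is to choose $b$ with $b^2=-1$ so that $bF(p)$ commutes with the coordinates $a_1,\dots,a_n$, after which the contradiction argument from the converse of Theorem~\ref{thm: Strong Nullstellensatz over quaternions} goes through. The paper merely asserts the existence of such a $b$ and says ``replacing $a$ by $b$ in the rest of the proof'' suffices, whereas you supply the dimension-counting argument for the existence of $b$ and spell out the induction showing $((bF)^m)(p)$ centralizes the $a_i$.
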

		\begin{proof}
			We modify the proof of Theorem \ref{thm: Strong Nullstellensatz over quaternions} to show this result. We only need to prove the ``if" direction. The only place in the proof of this direction  that needs to be modified is the step at which we set  $a=F(a_1,...,a_n)^{-1}$. We note that  there exists an element $b\in\mathbb{H}$ with $b^2=-1$ such that $bF(a_1,...,a_n)$ commutes with  $a_1,...,a_n$. Replacing $a$ by $b$ in the rest of the proof of Theorem \ref{thm: Strong Nullstellensatz over quaternions} gives a proof of the weakened version. 
		\end{proof}
		The following example shows that the condition cannot be weakened further in this fashion. 
		\begin{example}
			Consider the left ideal $I$ of $\mathbb{H}[x]$ that is generated by $x-i$. We have $\mathcal{V}(I)=\{i\}$. The constant polynomial $F=1$ does not vanish on the zero locus of $I$. However, we have
			\[
				bF=-\left( b(bi-ib)^{-1}b\right) (x-i)+\left( b(bi-ib)^{-1}\right) (x-i)(bF)\in I+I(bF),
			\]  
			for all $b\in\mathbb{H}$ with $b^2=-1$ except $b=\pm i$. 
 		\end{example}
		Next we discuss a consequence of Theorem \ref{thm: Strong Nullstellensatz over quaternions} regarding the concept of left radical. Recall that the left radical of a left ideal $I$ of a ring $R$ is defined to be the intersection of all completely prime left ideals of $R$ containing $I$ (see \cite[Definition 4.5]{AlonNullestellensatz}).
		It is shown in \cite{AlonNullestellensatz} that a polynomial $F\in \mathbb{H}[x_1,\dots, x_n]$ vanishes on the zero locus of a left ideal $I$ of $\mathbb{H}[x_1,\dots, x_n]$ iff  $F$ belongs to the left radical of $I$  (this is the strong form of Nullstellensatz  given in \cite[Theorem 1.2]{AlonNullestellensatz}).  Our version of Hilbert's Nullstellensatz gives us the following description of the left radical of a left ideal of $\mathbb{H}[x_1,\dots, x_n]$.
		\begin{corollary}
			Let $I$ be a proper left ideal of $\mathbb{H}[x_1,\dots, x_n]$. The left radical of $I$ coincides with the set of all polynomials $F\in \mathbb{H}[x_1,\dots, x_n]$ such that for every $a\in\mathbb{H}$, there exists a natural number $N\geq 1$ satisfying the condition 
			\[
			(aF)^N\in I+I(aF)+I(aF)^2+\cdots+I(aF)^N.
			\] 
		\end{corollary}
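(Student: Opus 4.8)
The plan is to deduce this corollary by combining two facts already available in the excerpt: our Theorem~\ref{thm: Strong Nullstellensatz over quaternions}, which characterizes the polynomials $F$ vanishing on $\mathcal{V}(I)$ precisely as those satisfying the displayed condition for every $a\in\mathbb{H}$, and the strong form of Nullstellensatz from \cite{AlonNullestellensatz} (\cite[Theorem~1.2]{AlonNullestellensatz}), which states that $F$ vanishes on $\mathcal{V}(I)$ if and only if $F$ lies in the left radical of $I$. Chaining these two equivalences, the left radical of $I$ equals $\mathcal{I}(\mathcal{V}(I))$, which in turn equals the set of $F$ satisfying the condition in the statement. So the proof is essentially a one-line ``sandwich'' argument.

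The only point requiring a modicum of care is the hypothesis that $I$ is \emph{proper}. I would check where this is used: the cited strong Nullstellensatz of \cite{AlonNullestellensatz} is phrased for proper left ideals (the left radical is defined via completely prime left ideals containing $I$, and if $I=\mathbb{H}[x_1,\dots,x_n]$ there are none, so the intersection would be the whole ring, matching $\mathcal{I}(\mathcal{V}(I))=\mathcal{I}(\emptyset)=\mathbb{H}[x_1,\dots,x_n]$ anyway). In any case, for a proper $I$ the statement is clean, and I would simply restrict to that case as in the corollary. I should also note that Theorem~\ref{thm: Strong Nullstellensatz over quaternions} does not require properness of $I$ at all, so the only constraint is the one inherited from the cited result.

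Concretely, the steps in order: (1) invoke Theorem~\ref{thm: Strong Nullstellensatz over quaternions} to identify $\mathcal{I}(\mathcal{V}(I))$ with the set of $F$ satisfying the condition ``for every $a\in\mathbb{H}$ there is $N\ge 1$ with $(aF)^N\in I+I(aF)+\cdots+I(aF)^N$''; (2) invoke \cite[Theorem~1.2]{AlonNullestellensatz} to identify $\mathcal{I}(\mathcal{V}(I))$ with the left radical of $I$; (3) conclude the two sets are equal. I do not anticipate any genuine obstacle here — the content is entirely carried by the two quoted theorems, and the corollary is a matter of transitivity of ``if and only if''. If anything, the only thing worth a sentence is making explicit that both characterizations refer to the \emph{same} set $\mathcal{I}(\mathcal{V}(I))$, so that no compatibility of the descriptions needs to be verified beyond what the theorems already give.

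\begin{proof}
	By Theorem~\ref{thm: Strong Nullstellensatz over quaternions}, a polynomial $F\in\mathbb{H}[x_1,\dots,x_n]$ satisfies the stated condition (for every $a\in\mathbb{H}$ there exists $N\ge 1$ with $(aF)^N\in I+I(aF)+\cdots+I(aF)^N$) if and only if $F$ vanishes on $\mathcal{V}(I)$, i.e.\ $F\in\mathcal{I}(\mathcal{V}(I))$. On the other hand, by the strong form of Nullstellensatz over quaternions \cite[Theorem~1.2]{AlonNullestellensatz}, a polynomial $F$ vanishes on $\mathcal{V}(I)$ if and only if $F$ belongs to the left radical of $I$. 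Combining these two equivalences, the left radical of $I$ coincides with the set of all $F\in\mathbb{H}[x_1,\dots,x_n]$ satisfying the stated condition.
\end{proof}
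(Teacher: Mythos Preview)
Your proof is correct and matches the paper's approach exactly: the paper presents this corollary without a separate proof, simply noting in the preceding paragraph that combining \cite[Theorem~1.2]{AlonNullestellensatz} with Theorem~\ref{thm: Strong Nullstellensatz over quaternions} yields the stated description of the left radical. Your written-out argument is precisely that combination.
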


	\end{subsection}

\end{section} 

\bibliographystyle{plain}
\bibliography{StrongNullestellensatzOverHbiblan}

 \end{document}